\newtheorem{thm}{Theorem}[section]
\newtheorem{lemma}[thm]{Lemma}
\newtheorem{thmIntro}{Theorem}
\theoremstyle{definition}
\newtheorem{defn}[thm]{Definition}
\theoremstyle{remark}
\numberwithin{equation}{section}
\newcommand{\dwst}[0]{_{*}}
\newcommand{\mC}{\mathbb{C}}
\newcommand{\mH}{\mathbb{H}}
\newcommand{\mD}{\mathbb{D}}
\newcommand{\mS}{\mathbb{S}}
\newcommand{\mZ}{\mathbb{Z}}
\newcommand{\myeqref}[1]{\text{(}\ref{#1}\text{)}}
\begin{document}

\title{Some Remarks on Isoparametric Functions in Closed 4-Manifolds}

\author{Minghao Li}
\address{Shanghai Center for Mathematical Sciences, Fudan University, Shanghai, 200438, China}
\email{mhli19@fudan.edu.cn}


\subjclass[2020]{Primary 53C20, 57R30; Secondary 53C12, 57K40}



\keywords{Riemannian manifold, isoparametric function, double disk bundle decomposition, negatively curved metric}

\begin{abstract}
  We study transnormal and isoparametric functions on closed Riemannian 4-manifolds and establish fundamental restrictions on their topology and geometry. In particular, we show that such manifolds cannot be endowed with negatively curved metrics, contrasting with known results in the compact simply connected case. Moreover, in certain cases, we provide a description of their fundamental groups. These findings contribute to a better understanding of the global structure of isoparametric foliations.
\end{abstract}

\maketitle

\section{Introduction}

    A smooth function $f$ on a Riemannian manifold $(M,g)$ is called {\it transnormal} if there is a smooth function $b$ such that $|\nabla f|^2=b(f)$. If, in addition, $\Delta f=a(f)$ for another smooth function $a$, then $f$ is called {\it isoparametric}. Each regular level hypersurface of an isoparametric function is called an {\it isoparametric hypersurface}. The classification of isoparametric hypersurfaces in certain special spaces has a long and rich history. The study of isoparametric hypersurfaces was significantly advanced by Cartan (\cite{Cartan1,Cartan2}), who completed the classification in Euclidean and hyperbolic spaces and initiated the investigation in spheres. The discovery of inhomogeneous examples (\cite{ozeki1975some,ferus1981cliffordalgebren}) highlighted the complexity of the spherical case, leading to extensive research over several decades and culminating in a complete classification (\cite{takagi1972principal,munzner1980isoparametric,munzner1981isoparametric,abresch1983isoparametric,dorfmeister1985isoparametric,cecil2007isoparametric,immervoll2008classification,chi2011isoparametric,chi2013isoparametric,miyaoka2013isoparametric,miyaoka2016errata,chi2020isoparametric}). Similar classification problems have also been studied in other spaces, including complex projective spaces $\mC P^n$ (\cite{wang1982isoparametric,kimura1986real,park1989isoparametric,dominguez2016isoparametric}), complex hyperbolic spaces $\mC\mH^n$ (\cite{berndt1989real,berndt2006real,berndt2007real,diaz2012inhomogeneous,diaz2017isoparametric}), compact symmetric spaces (\cite{murphy2012curvature}), and the product space $\mS^2\times\mS^2$ (\cite{urbano2019hypersurfaces}).
    \par    
    For general Riemannian manifolds, a significant result is Wang's work \cite{isoparaWang1987} on the tubular regularity of transnormal functions on connected complete Riemannian manifolds, showing that the focal varieties, which are the level sets taking the maximal or minimal values, are submanifolds, while the other level sets are tubes over either of the focal varieties. This result directly leads to the fact that an arbitrary transnormal function can induce an embedded {\it transnormal system} (see \cite{bolton1973transnormal}) of codimension 1, which also forms a singular Riemannian foliation of codimension one. 
    \par   
    Building on these developments, subsequent studies have investigated isoparametric functions and isoparametric foliations under various specific conditions (e.g., \cite{ge2015differentiable,qian2016differential,ge2016isoparametric,peng2017ricci}), with most focusing on particular cases. More recently, in collaboration with Yang, the author established some equivalent conditions for the existence of isoparametric or transnormal functions on closed Riemannian manifolds, stated as follows.
    \begin{thmIntro}[\cite{li2025equiv}]\label{thm: isopara and LDDBD}
        For each closed smooth manifold $M$, the following statements are equivalent:
        \begin{enumerate}[(a)]
            \item $M$ admits a linear double disk bundle decomposition.
            \item $M$ can be endowed with a Riemannian metric so that it admits an embedded transnormal system of codimension 1.
            \item $M$ can be endowed with a Riemannian metric so that it admits a transnormal function.
            \item $M$ can be endowed with a Riemannian metric so that it admits an isoparametric function. 
        \end{enumerate}
    \end{thmIntro}
    \par   
    Here, a linear double disk bundle decomposition means that the manifold $M$ can be expressed as the union of two linear disk bundles, glued along their common boundary by a diffeomorphism. The term "linear" refers to these disk bundles being subbundles of vector bundles, with fibers being unit disks. Notably, this includes the special case of one-dimensional disks. 
    \par    
    For closed simply connected 4-manifolds, \cite{ge2015differentiable} has classified singular Riemannian foliations of codimension 1 by employing the double disk bundle decomposition. This classification extends to isoparametric foliations. Moreover, the manifolds admitting such structures can be explicitly characterized, as stated in the following theorem:
    \begin{thmIntro}[\cite{ge2015differentiable}] \label{thm: Ge elliptic in 3d}
        Let $M$ be a closed simply connected Riemannian 4-manifold admitting an isoparametric function. Then $M$ is diffeomorphic to one of the following: the standard $\mS^4$, $\mC\mathbb{P}^2$, $\mS^2\times\mS^2$, or $\mC\mathbb{P}^2\#\pm \mC\mathbb{P}^2$. 
    \end{thmIntro}
    \par    
    This result was later extended to dimension five in \cite{devito2023manifolds}, where the possible diffeomorphism types of such manifolds are $\mS^5$, the Wu manifold $\operatorname{SU}(3)/\operatorname{SO}(3)$, $\mS^3\times\mS^2$, and the unique non-trivial $\mS^3$-bundle over $\mS^2$.
    \par    
    In contrast, results concerning closed manifolds with general connectivity remain sparse. In this paper, we focus on closed Riemannian 4-manifolds admitting a transnormal (or isoparametric) function without assuming simple connectivity, and establish the following theorem:
    \begin{thmIntro}\label{thm: exclu of hyperbolic}
        Let $M$ be a closed Riemannian 4-manifold admitting an isoparametric function. Then, the underlying manifold $M$ cannot be equipped with a negatively curved metric. 
    \end{thmIntro}
    \par  
    This theorem contrasts with the ellipticity in Theorem \ref{thm: Ge elliptic in 3d}, and it is nontrivial as the conclusion does not hold in dimension three (see Section \ref{sec: fail in 3d}). By imposing a further restriction, we obtain a more refined result: 
    \begin{thmIntro}\label{thm: fundamental group when singular}
        Let $M$ be a closed Riemannian 4-manifold admitting an isoparametric function $f$.
        \begin{enumerate}[(I)]
            \item If two connected components of the level sets of $f$ have codimension greater than 1, then one of the following occurs:
            \begin{enumerate}[(i)]
                \item the fundamental group of $M$ is abelian;
                \item the fundamental group of $M$ is isomorphic to a semidirect product of two cyclic groups;
                \item $M$ is a 2-sphere bundle over a closed hyperbolic surface.
            \end{enumerate}
            \item If only one connected component of the level sets of $f$ has codimension greater than 1, then $M$ has a connected double cover, which satisfies one of the conclusions in (I). 
        \end{enumerate}
    \end{thmIntro}
    \par    
    Theorems \ref{thm: exclu of hyperbolic} and \ref{thm: fundamental group when singular} establish fundamental constraints on the geometric properties of isoparametric foliations. Isoparametric foliations can be viewed as a generalization of cohomogeneity-one actions, which are typically associated with non-negative curvature (see, for example, \cite{neumann19683,Parker19864dimensionalGW,grove2002cohomogeneity,hoelscher2010classification}). However, much less is known about their curvature behavior. The results above indicate that, while isoparametric foliations extend beyond cohomogeneity-one actions, their curvature behavior remains subject to significant restrictions. These findings contribute to a deeper understanding of the interplay between isoparametric structures and the global geometric properties of the underlying manifold.

\section{Preliminaries}\label{sec: preliminaries}
    This section introduces the necessary background and foundational concepts for the subsequent discussions. It also includes key notations and results that will be referenced throughout the paper. The subsection on homotopy groups is largely based on \cite{hatcher2002algebraic}. 
    
\subsection{Homotopy groups.}
    \begin{defn}
        Let $\mathbf{I}^n$ be the $n$-dimensional unit cube. For a topological space $X$ with a fixed point $x_0\in X$, the \textit{n-th homotopy group} of $X$, denoted by $\pi_n(X,x_0)$, is the set of all continuous maps $f:\mathbf{I}^{n}\to X$ such that $f\left(\partial\, \mathbf{I}^{n}\right)=x_0$, under the equivalence relation of homotopies that preserves $\partial\, \mathbf{I}^{n}$. For $f_1,f_2\in\pi_n(X,x_0)$, define $f_1f_2$ by 
        \[
            (f_1f_2)(t_1,...,t_n)=\left\{
                \begin{aligned}
                    &f_1(2t_1,t_2,...,t_n) & \text{for}\ 0\le t_1\le \frac12,\\
                    &f_2(2t_1-1,t_2,...,t_n) & \text{for}\ \frac12\le t_1\le 1.
                \end{aligned}
            \right.
        \]
        The first homotopy group is also called the \textit{fundamental group}, and others are the \textit{higher homotopy groups}.
    \end{defn}
    \par 
    This definition can be extended to the case $n=0$ by letting $\mathbf{I}^{0}$ be a single point and $\partial\, \mathbf{I}^{0}$ the empty set. In this case, $\pi_0(X)$ corresponds to the set of path-components of $X$. Furthermore, if $\pi_0(X)$ is trivial, meaning that $X$ is path-connected, the homotopy groups of $X$ can be denoted simply by $\pi_n(X)$. If $X$ is not path-connected, the homotopy groups are typically considered separately for each path-component of $X$.
    \par 
    For the fundamental group, certain topological structures on a topological space can offer additional insights into its behavior. Assume a topological space $X$ is decomposed as the union of two path-connected open subsets $A_{1}$ and $A_{2}$, the intersection of which is also path-connected. Then, the inclusions $A_{k}\hookrightarrow X$ induce the homomorphisms from $\pi_1(A_k)$ to $\pi_1(X)$, and they extend to a homomorphism $\Phi$ from the free product $\pi_1(A_1)*\pi_1(A_2)$ to $\pi_1(X)$. Meanwhile, the inclusions $i_{k}:A_{1}\cap A_2\to A_k$ induce the homomorphisms ${i_{k}}_{*}: \pi_1(A_1\cap A_2)\to \pi_1(A_k)$. Under these notations, the well-known Seifert-van Kampen theorem is stated as follows:
    \par
    \begin{lemma}[Seifert-van Kampen Theorem]\label{lem: Seifert van kampen thm}
        Suppose $X=A_1\cup A_2$ with $A_{1}$, $A_{2}$ and $A_1\cap A_2$ all being path-connected open subset of $X$. Then the homomorphism $\Phi:\pi_1(A_1)*\pi_1(A_2)\to \pi_1(X)$ is surjective, and the kernel of $\Phi$ is the normal subgroup $N$ generated by all the elements of form ${i_{1}}_{*}(\omega)\,{i_{2}}_{*}(\omega)^{-1}$ for $\omega\in \pi_1(A_1\cap A_2)$. Hence, $\pi_1(X)\cong \pi_1(A_1)*\pi_1(A_2)\big/N$.
    \end{lemma}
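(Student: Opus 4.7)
The plan is to prove the theorem in three stages: surjectivity of $\Phi$, the inclusion $N\subseteq\ker\Phi$, and finally the reverse inclusion $\ker\Phi\subseteq N$. Throughout, fix a basepoint $x_0\in A_1\cap A_2$, which is available because the intersection is path-connected and nonempty.

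For surjectivity, I would begin with an arbitrary loop $\gamma:[0,1]\to X$ based at $x_0$. Applying the Lebesgue number lemma to the open cover $\{\gamma^{-1}(A_1),\gamma^{-1}(A_2)\}$ of $[0,1]$ yields a partition $0=t_0<t_1<\cdots<t_m=1$ such that each restriction $\gamma|_{[t_{j-1},t_j]}$ lies wholly inside some $A_{k_j}$. By merging adjacent subintervals when $k_{j-1}=k_j$, one may assume the indices alternate, which forces each interior point $\gamma(t_j)$ to lie in $A_1\cap A_2$. Using path-connectedness of $A_1\cap A_2$, pick a path $\alpha_j$ from $x_0$ to $\gamma(t_j)$ inside the intersection and insert the cancelling pairs $\alpha_j\overline{\alpha_j}$ between consecutive segments. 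This expresses $[\gamma]$ as a product of classes of loops, each based at $x_0$ and lying in $A_1$ or $A_2$, proving surjectivity.

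The inclusion $N\subseteq\ker\Phi$ is immediate: for any $\omega\in\pi_1(A_1\cap A_2,x_0)$, the classes ${i_1}_*(\omega)$ and ${i_2}_*(\omega)$ are both represented by the same underlying loop viewed inside $X$, so their images under $\Phi$ agree, whence ${i_1}_*(\omega)\,{i_2}_*(\omega)^{-1}\in\ker\Phi$. Since $\ker\Phi$ is normal, it contains the normal closure $N$.

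The main obstacle is the reverse inclusion $\ker\Phi\subseteq N$, and the strategy is a two-dimensional subdivision argument. Suppose a product $\beta_1\beta_2\cdots\beta_r$ of loops, each based at $x_0$ and contained in some $A_{k_j}$, represents the trivial element of $\pi_1(X)$, witnessed by a null-homotopy $H:[0,1]\times[0,1]\to X$. Again by the Lebesgue number lemma, one may subdivide the square into a fine grid of subrectangles $R_{a,b}$, each mapped by $H$ into a single $A_{k(a,b)}$, and, after a small perturbation of the grid vertices, arrange that the image of every vertex lies in $A_1\cap A_2$. Connecting each vertex-image to $x_0$ via paths chosen inside the appropriate sets then turns every horizontal slice of the grid into a word in the free product $\pi_1(A_1)*\pi_1(A_2)$, with the bottom slice equal to $\beta_1\cdots\beta_r$ and the top slice trivial. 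The crucial step is to check that passing from one horizontal slice to the next, which amounts to gluing in one row of subrectangles, modifies the word only by insertions of trivial loops, by applications of the relations inside each $\pi_1(A_k)$, and by substitutions of the form ${i_1}_*(\omega)\leftrightarrow{i_2}_*(\omega)$ for $\omega\in\pi_1(A_1\cap A_2)$. Since each of these moves represents the identity modulo $N$, the initial word lies in $N$, completing the proof. The delicate bookkeeping of which $A_k$ contains each auxiliary path, and verifying that each elementary move is indeed of one of the three listed types, is where the argument requires the most care.
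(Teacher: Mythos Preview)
The paper does not supply a proof of this lemma; it is stated as a standard background result in the preliminaries, with the surrounding text citing \cite{hatcher2002algebraic}. Your three-stage outline---surjectivity via a Lebesgue-number subdivision of the loop, the trivial inclusion $N\subseteq\ker\Phi$, and the reverse inclusion via a grid subdivision of the null-homotopy square---is exactly the classical textbook argument (essentially Hatcher's), and is correct as a plan. One small point: the phrase ``after a small perturbation of the grid vertices'' is not quite how the standard proof runs; rather, one observes that any vertex shared by subrectangles labelled with both $A_1$ and $A_2$ already has image in $A_1\cap A_2$, while vertices surrounded only by $A_k$-labelled subrectangles need only be joined to $x_0$ by a path in that same $A_k$.
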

    \par  
    For negatively curved manifold $M$, i.e., a smooth manifold which can be endowed with a Riemannian metric of negative sectional curvatures, the Cartan-Hadamard theorem implies that $M$ has trivial higher homotopy groups. Moreover, the fundamental group of such a manifold exhibits remarkable properties.
    \begin{lemma}[{\cite[Chap.12]{do1992-riemannian}}] \label{lemma: hyper non abelian}
        Let $M$ be a closed Riemannian manifold with negative sectional curvatures. $\pi_1(M)$ is not abelian.
    \end{lemma}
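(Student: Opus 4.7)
My plan is to argue by contradiction and reduce the statement to Preissmann's theorem. Suppose $\pi_1(M)$ is abelian. Since $M$ is closed with strictly negative sectional curvature, Preissmann's theorem---a classical result available in the cited text \cite{do1992-riemannian}---asserts that every nontrivial abelian subgroup of $\pi_1(M)$ is infinite cyclic. Applying this with the entire group $\pi_1(M)$ as the subgroup, I conclude that $\pi_1(M)$ is either trivial or isomorphic to $\mathbb{Z}$, and the remainder of the proof consists in ruling out both possibilities.

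To dispense with each case, I would use the Cartan-Hadamard theorem alluded to in the excerpt, which gives a diffeomorphism between the universal cover of $M$ and $\mathbb{R}^{\dim M}$, and in particular renders $M$ aspherical. If $\pi_1(M)$ is trivial, then $M$ itself is diffeomorphic to $\mathbb{R}^{\dim M}$, contradicting the compactness of $M$ (noting that $\dim M \geq 2$ for sectional curvature to be defined at all). If $\pi_1(M) \cong \mathbb{Z}$, then $M$ is a $K(\mathbb{Z}, 1)$ and therefore homotopy equivalent to $S^1$; this forces $H_k(M;\mathbb{Z}/2) = 0$ for every $k \geq 2$, in contradiction with Poincar\'e duality for the closed connected manifold $M$, which gives $H_{\dim M}(M;\mathbb{Z}/2) = \mathbb{Z}/2$ whenever $\dim M \geq 1$.

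The main obstacle is Preissmann's theorem itself, whose proof rests on a careful analysis of commuting deck transformations acting as isometries on the negatively curved universal cover, typically via the flat strip argument in CAT$(0)$ geometry. Since this is the content of the cited chapter of do Carmo's textbook, I would simply quote it, and devote the body of the proof to the short ruling-out arguments above. No further input is needed, since once Preissmann narrows the possibilities to $\{0, \mathbb{Z}\}$ the topological obstructions are immediate from Cartan-Hadamard and Poincar\'e duality.
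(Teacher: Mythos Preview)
The paper does not prove this lemma at all; it is stated with a bare citation to do Carmo and used as a black box. Your argument is correct and is in fact the standard derivation one finds in that reference: Preissmann forces an abelian $\pi_1(M)$ to be trivial or infinite cyclic, and Cartan--Hadamard asphericity together with $\mathbb{Z}/2$-Poincar\'e duality rules out both when $\dim M\ge 2$. Nothing further is needed.
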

    \begin{lemma}[Preissmann's theorem]\label{lemma: hyper sub is Z}
        Let $M$ be a compact Riemannian manifold with negative sectional curvatures. Then any non-trivial abelian subgroup of the fundamental group $\pi_1(M)$ is isomorphic to $\mZ$.
    \end{lemma}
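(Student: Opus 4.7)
The plan is to pass to the universal cover and translate the algebraic commuting condition into a geometric statement about shared axes of isometries. By Cartan--Hadamard, the universal cover $\widetilde{M}$ is diffeomorphic to $\mR^n$ and carries a complete Riemannian metric of negative curvature, so any two points are joined by a unique geodesic and any two geodesics diverge at infinity unless they coincide (no flat strips). The fundamental group $\pi_1(M)$ acts on $\widetilde{M}$ by deck transformations, which are fixed-point-free isometries, and since $M$ is compact the action is cocompact and properly discontinuous.

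The first substantive step is to show that every nontrivial $\alpha\in\pi_1(M)$ admits a unique \emph{axis}: a complete geodesic $\gamma_\alpha\subset\widetilde{M}$ along which $\alpha$ acts as translation by a positive distance $\ell(\alpha)$. One way is to take a shortest closed geodesic in the free homotopy class determined by the conjugacy class of $\alpha$ (which exists by compactness of $M$ and standard variational arguments), and lift it; negative curvature rules out the existence of a second minimizing geodesic, giving uniqueness of the axis. Equivalently, the displacement function $p\mapsto d(p,\alpha p)$ on $\widetilde{M}$ is strictly convex off its minimum set, and the minimum is attained on precisely one geodesic.

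Next, given two commuting nontrivial elements $\alpha,\beta$, I would show they share the same axis. If $\alpha\beta=\beta\alpha$, then for every $p\in\widetilde{M}$,
\[
    d(\beta p,\alpha\beta p)=d(\beta p,\beta\alpha p)=d(p,\alpha p),
\]
so $\beta$ preserves the displacement function of $\alpha$ and in particular maps $\gamma_\alpha$ to another axis of $\alpha$. Uniqueness of the axis then forces $\beta(\gamma_\alpha)=\gamma_\alpha$, so $\beta$ preserves $\gamma_\alpha$ setwise. Since $\beta$ has no fixed points in $\widetilde{M}$, it must act on $\gamma_\alpha$ as a nontrivial translation; thus $\gamma_\alpha$ is also the axis of $\beta$.

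Finally, any abelian subgroup $A\le\pi_1(M)$ therefore consists of isometries sharing a common axis $\gamma$, and the assignment $\alpha\mapsto\ell(\alpha)\in\mR$ (with sign recording direction) embeds $A$ as a subgroup of $(\mR,+)$. Proper discontinuity of the deck action on $\gamma$ implies this subgroup of $\mR$ is discrete, hence infinite cyclic, and the embedding is injective because any nontrivial $\alpha\in A$ translates by a positive amount. Thus $A\cong\mZ$. The main obstacle I anticipate is the no-flat-strip/uniqueness-of-axis input: without strict negativity of curvature this step collapses (flat tori show the conclusion genuinely fails in nonpositive curvature), so the argument must invoke strict convexity of the displacement function or the absence of flat rectangles in $\widetilde{M}$ in a clean way.
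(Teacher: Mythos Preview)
Your outline is the standard proof of Preissmann's theorem and is correct in all its essential steps: existence and uniqueness of an axis for each nontrivial deck transformation (using compactness for existence and strict negative curvature for uniqueness), the observation that commuting elements permute and hence share axes, and the conclusion that $A$ embeds as a discrete subgroup of $(\mR,+)$. The only point worth tightening is the remark that $\beta$ ``must act on $\gamma_\alpha$ as a nontrivial translation'': this follows because any isometry of $\mR$ is either a translation or a reflection, and a reflection has a fixed point, contradicting freeness of the deck action.

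As for comparison with the paper: the paper does not prove this lemma at all. It is stated in the preliminaries as Preissmann's theorem, a classical result quoted from the literature (the surrounding reference is do~Carmo's textbook), and is used as a black box in the case analysis of Section~3. So your proposal supplies strictly more than the paper does; there is no alternative approach in the paper to compare against.
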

    \par    
    Next, we introduce the concept of a fiber bundle and establish a refined property of homotopy groups.
    \begin{defn}
        A sequence $F\xhookrightarrow{i} E\xrightarrow{p} B$ is called a \textit{fiber bundle} if 
        \begin{enumerate}[(i)]
            \item $E$, $B$ and $F$ are topological spaces, called the \textit{total space}, \textit{base space} and \textit{fiber} respectively;
            \item $p:E\to B$ is a continuous surjective map, called the \textit{fiber bundle projection} or simply \textit{projection};
            \item the inverse image $F_b=p^{-1}(b)$ is homeomorphic to $F$ for each point $b\in B$, called the \textit{fiber over $b$};
            \item $B$ has an open covering $\{U_{\alpha}\}_{\alpha\in\Lambda}$ such that for each $\alpha\in\Lambda$, there is a homeomorphism $\psi_{\alpha}:U_{\alpha}\times F\to p^{-1}(U_{\alpha})$ such that the composite $p\circ\psi_{\alpha}$ is the projection to the first factor.
        \end{enumerate}
    \end{defn}
    \par  
    A fiber bundle induces a long exact sequence of homotopy groups, which is a standard result in algebraic topology (see, for example, \cite[\S 4.2]{hatcher2002algebraic}).
    \begin{lemma}\label{lemma: long sequ of fibration}
        Let $B$ be a smooth connected manifold, and let $F\xhookrightarrow{i} E\xrightarrow{p} B$ be a fiber bundle. Choose an arbitrary base point $b_0\in B$ and $x_0\in F= p^{-1}(b_0)$. Then, there exists a long exact sequence as follows:
        \begin{align*}
            \cdots\rightarrow\pi_n(F,x_0)&\xrightarrow{i_{*}}\pi_n(E,x_0)\xrightarrow{p_{*}}\pi_n(B,b_0)\rightarrow\pi_{n-1}(F,x_0)\xrightarrow{i_{*}} \cdots
            \\
            \cdots&\xrightarrow{i_{*}}\pi_0(E,x_0)\xrightarrow{p_{*}} \pi_0(B,x_0)=1.
        \end{align*}
    \end{lemma}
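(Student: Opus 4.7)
The plan is to derive the long exact sequence from the homotopy lifting property (HLP) of the bundle projection. The proof has three stages: establish that $p: E \to B$ is a Serre fibration, define a connecting homomorphism $\partial : \pi_n(B, b_0) \to \pi_{n-1}(F, x_0)$, and verify exactness at each of the three types of terms in the sequence.

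First, I would show that the fiber bundle projection is a Serre fibration. Since $B$ is a smooth manifold, it is paracompact; combined with the local trivializations $\psi_\alpha$ from the definition, a standard partition-of-unity argument shows that $p$ admits lifts of homotopies of cubes. That is, for every continuous $f : I^n \to E$ and every homotopy $h : I^n \times I \to B$ of $p \circ f$, there exists a lift $\tilde h : I^n \times I \to E$ extending $f$ and projecting to $h$. I would then define $\partial$ as follows: for $[\alpha] \in \pi_n(B, b_0)$ represented by $\alpha : I^n \to B$ with $\alpha(\partial I^n) = b_0$, set $J^{n-1} := (I^{n-1} \times \{0\}) \cup (\partial I^{n-1} \times I) \subset I^n$, apply HLP to extend the constant map at $x_0$ on $J^{n-1}$ to a lift $\tilde\alpha : I^n \to E$ of $\alpha$, and observe that the restriction of $\tilde\alpha$ to the opposite face $I^{n-1} \times \{1\}$ takes values in $F = p^{-1}(b_0)$ and sends $\partial I^{n-1}$ to $x_0$. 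This restricted map represents $\partial[\alpha]$; well-definedness up to homotopy and the homomorphism property both follow from further applications of HLP to homotopies of $\alpha$.

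Finally, exactness is verified at each position by diagram-chasing with lifts. For instance, at $\pi_n(B)$: the class $[\alpha]$ lies in the kernel of $\partial$ precisely when the restricted lift is nullhomotopic in $F$ rel $\partial I^{n-1}$, and such a nullhomotopy can be spliced onto $\tilde\alpha$ to produce a new lift sending all of $\partial I^n$ to $x_0$; this lift represents a class in $\pi_n(E)$ mapping to $[\alpha]$ under $p_*$. Exactness at $\pi_n(E)$ and at $\pi_{n-1}(F)$ are analogous: kernel-of-$p_*$ equals image-of-$i_*$ follows by lifting a nullhomotopy of $p \circ \beta$, and kernel-of-$i_*$ equals image-of-$\partial$ is essentially built into the construction of $\partial$. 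The main technical obstacle is the HLP itself: patching local lifts over a non-compact paracompact base requires a careful inductive construction, but this is entirely standard and is treated in detail in \cite[\S 4.2]{hatcher2002algebraic}, which the paper cites directly.
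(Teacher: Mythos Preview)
Your outline is correct and follows the standard argument. The paper itself does not prove this lemma at all: it is stated as a well-known fact with a citation to \cite[\S 4.2]{hatcher2002algebraic}, precisely the reference you invoke at the end of your proposal. Your sketch of the Serre-fibration/HLP approach with the connecting homomorphism $\partial$ and the three exactness verifications is exactly the argument found there, so there is nothing to compare.
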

    \par

\subsection{Double disk bundle decomposition}
    Let $M$ be a smooth manifold of the following structure:
    \begin{itemize}
        \item $M=\mathcal{D}_{+} \bigcup \mathcal{D}_{-}$, where $\mathcal{D}_{+}$ and $\mathcal{D}_{-}$ are closed subsets of $M$ with $\mathcal{D}_{+}\bigcap\mathcal{D}_{-}=\partial\mathcal{D}_{+}=\partial\mathcal{D}_{-}$;
        \item $\mathcal{P}_{+}:\mathcal{D}_{+}\to L_{+}$ (resp. $\mathcal{P}_{-}:\mathcal{D}_{-}\to L_{-}$) is a closed $k_{+}$-disk (resp. $k_{-}$-disk) fiber bundle over an embedded closed submanifold $L_{+}$ (resp. $L_{-}$) of $\mathcal{D}_{+}$ (resp. $\mathcal{D}_{-}$), where $k_{\pm}\ge 1$;
        \item the attaching map between $\mathcal{D}_{+}$ and $\mathcal{D}_{-}$ is a diffeomorphism $\phi:\partial\mathcal{D}_{+}\to\partial\mathcal{D}_{-}$.
    \end{itemize}
    We say $(\mathcal{P}_{\pm}:\mathcal{D}_{\pm}\to L_{\pm},\phi:\partial\mathcal{D}_{+}\to\partial\mathcal{D}_{-})$ is a double disk bundle decomposition of the smooth manifold $M$. Moreover, if each disk bundle is linear, that is, it is a disk subbundle of a vector bundle, we say $(\mathcal{P}_{\pm}:\mathcal{D}_{\pm}\to L_{\pm},\phi:\partial\mathcal{D}_{+}\to\partial\mathcal{D}_{-})$ is a {\bf linear double disk bundle decomposition}({\bf LDDBD}) of $M$. 
    \par    
    \par    
    In this paper, we mainly restrict our discussion to the case $\dim M=4$. In this setting, if $M$ admits a double disk bundle decomposition, then for each disk bundle, it is either a $4$-disk (over a single point) or a $k$-disk bundle over a closed manifold, where $k\le 3$. Since the orientation-preserving diffeomorphism group of a $k$-disk deformation retracts to $\operatorname{SO}(k)$ for $k\le 3$, these disk bundles are linear disk bundles in the sense of diffeomorphism. Thus, we can always assume that the double disk bundle decomposition of a 4-manifold is linear.

\section{Proofs of the Main Theorems}
    In this section, we first establish a lemma describing the behavior of fundamental groups under double disk bundle decompositions. Using this result, we then proceed to prove the main theorems.
\subsection{Auxiliary results on the fundamental group}
    We investigate the fundamental groups of manifolds admitting a double disk bundle decomposition. To begin, we introduce an algebraic lemma, which is then applied to a particular case of the Seifert–van Kampen theorem.
    \begin{lemma}\label{lemma: algebraic lemma}
        Let $G,U,V$ be groups and $u:G\to U$, $v:G\to V$ be epimorphisms. Denote $N$ the normal subgroup of $U*V$ generated by all the elements of the form $u(g)v(g^{-1})$ for $g\in G$. Then, $U*V\big/N$ is isomorphic to $U\big/u(\operatorname{Ker}v)$. 
    \end{lemma}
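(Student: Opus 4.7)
The plan is to construct an explicit isomorphism $\bar\phi\colon U*V/N \to U/u(\operatorname{Ker} v)$ using the universal property of the free product, together with a one-sided inverse built from the inclusion $U \hookrightarrow U*V$. The surjectivity of $v$ is what lets us push everything in $V$ back to $U$, while quotienting by $u(\operatorname{Ker} v)$ on the target is precisely what makes the construction well-defined.

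First I would define group homomorphisms $\phi_U\colon U\to U/u(\operatorname{Ker} v)$ and $\phi_V\colon V\to U/u(\operatorname{Ker} v)$ as follows. Let $\phi_U$ be the natural projection. For $\phi_V$, given $w\in V$, choose any $g\in G$ with $v(g)=w$ (possible by surjectivity of $v$) and set $\phi_V(w) = [u(g)]$. The key check is well-definedness: if $v(g)=v(g')$ then $g(g')^{-1}\in\operatorname{Ker} v$, hence $u(g)u(g')^{-1}\in u(\operatorname{Ker} v)$, so $[u(g)]=[u(g')]$. One also verifies that $\phi_V$ is a homomorphism using that $u$ is a homomorphism. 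By the universal property of the free product, $\phi_U$ and $\phi_V$ assemble into a homomorphism $\phi\colon U*V \to U/u(\operatorname{Ker} v)$.

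Next I would verify that $\phi$ vanishes on $N$. Since $N$ is normal and $\phi$ is a homomorphism, it suffices to show $\phi(u(g)v(g^{-1}))=1$ for every $g\in G$; but by construction $\phi_V(v(g^{-1})) = [u(g^{-1})] = [u(g)]^{-1}$, giving $\phi(u(g)v(g^{-1})) = [u(g)][u(g)]^{-1} = 1$. Hence $\phi$ descends to $\bar\phi\colon U*V/N \to U/u(\operatorname{Ker} v)$. For the inverse direction, consider the composition $U \hookrightarrow U*V \twoheadrightarrow U*V/N$. For any $g\in\operatorname{Ker} v$, we have $u(g) = u(g)\,v(g^{-1})\in N$ (because $v(g^{-1})=1$), so $u(\operatorname{Ker} v)$ maps to the identity and the composition factors through a homomorphism $\bar\psi\colon U/u(\operatorname{Ker} v)\to U*V/N$.

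Finally I would check that $\bar\phi$ and $\bar\psi$ are mutually inverse. The composition $\bar\phi\circ\bar\psi$ is the identity on generators $[u]$ directly. For $\bar\psi\circ\bar\phi$, on elements coming from $U$ it is visibly the identity, while on an element $[w]$ with $w\in V$ and $v(g)=w$, it sends $[w]\mapsto [u(g)]\mapsto [u(g)]$ in $U*V/N$; but in $U*V/N$ the relation $u(g)v(g^{-1})\in N$ gives $[u(g)]=[v(g)]=[w]$, completing the proof. The only real subtlety is the well-definedness of $\phi_V$, which is exactly why the quotient is taken by $u(\operatorname{Ker} v)$ rather than a smaller subgroup; everything else is a formal verification via universal properties.
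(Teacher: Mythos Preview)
Your proof is correct and follows the same overall architecture as the paper's: construct homomorphisms in both directions between $U*V/N$ and $U/u(\operatorname{Ker} v)$ and verify they are mutual inverses. The difference lies in how the map $U*V/N \to U/u(\operatorname{Ker} v)$ is built. The paper writes elements of $U*V$ as alternating words $u(g_1)v(g_2)\cdots$ and sends such a word to $u(g_1g_2\cdots)\,u(\operatorname{Ker} v)$, then must check by hand that the result is independent both of the choices of the $g_i$ and of the coset modulo $N$; the first check requires a small inductive argument about products landing in $\operatorname{Ker}(u)\operatorname{Ker}(v)$. Your route via the universal property of the free product absorbs all of the word-combinatorics automatically: you only have to verify well-definedness of $\phi_V$ on a \emph{single} element of $V$, which is a one-line computation, and then the vanishing on the normal generators of $N$. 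This is a genuine streamlining of the same argument, and it makes transparent exactly where each hypothesis (surjectivity of $v$, the quotient by $u(\operatorname{Ker} v)$) is used.
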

    \begin{proof}
        Denote $K=u(\operatorname{Ker}v)$ in short, and $e_U,e_V,e_G$ the identity elements of $U,V,G$, respectively. Clearly, $\phi(x\,K)=x\,N$ defines a homomorphism from $U\big/K$ to $U*V\big/N$. Next, we construct a map $\psi:U*V\big/N\to U\big/K$ as follows. For any element $z\in U*V$, it is an alternating product of elements of $u(G)$ and $v(G)$ since $u,v$ are surjective. Therefore, $z$ can be written as $u(g_1)v(g_2)...v(g_{l})$. Then, we define 
        \[
            \psi(z)=\psi\big(u(g_1)v(g_2)...v(g_l)\,N\big):=u(g_1g_2...g_l)\,K.
        \]
        In order to check that $\psi$ is well-defined, we will verify 
        \begin{enumerate}[(a)]
            \item $\psi(n\,N)=e_U\,K$ for any $n\in N$;
            \item $u(g_1g_2...g_l)\,K=u(\bar{g}_1\bar{g}_2...\bar{g}_l)\,K$ if $u(g_1)=u(\bar{g}_1)$, $v(g_2)=v(\bar{g}_2)$ and so on. 
        \end{enumerate}
        \par  
        Note that $N$ is a subgroup generated by all the conjugates of the elements $u(g)v(g^{-1})$. 
        For an element $n=zu(g)v(g^{-1})z^{-1}\in N$, after writing $z$ as the form before, we directly have $\psi(n\, N)=e_U\,K$, which ensures (a). If $u(g_1)=u(\bar{g}_1)$, $v(g_2)=v(\bar{g}_2)$ and so on, we claim $\bar{g}_1\bar{g}_2...\bar{g}_l=g_1g_2...g_lh$ for some $h\in \operatorname{Ker}(u)\operatorname{Ker}(v)$. It is proved by induction on $l$. Assume $\bar{g}_1\bar{g}_2...\bar{g}_k=g_1g_2...g_kh$ with $h\in \operatorname{Ker}(u)\operatorname{Ker}(v)$, and we consider the case $l=k+1$. We have $\bar{g}_{k+1}=g_{k+1}h_{k+1}$ for some $h_{k+1}\in \operatorname{Ker}(u)\operatorname{Ker}(v)$, and 
        \[
            h\bar{g}_{k+1}=hg_{k+1}h_{k+1}=g_{k+1}\left(g_{k+1}^{-1}hg_{k+1}\right)h_{k+1}.
        \]
        Notice that $\bar{h}:=\left(g_{k+1}^{-1}hg_{k+1}\right)h_{k+1}$ still lies in $\operatorname{Ker}(u)\operatorname{Ker}(v)$, so we have obtained that $\bar{g}_1\bar{g}_2...\bar{g}_{k+1}=g_1g_2...g_{k+1}\bar{h}$, which proves the claim. Hence, condition (b) follows. Moreover, a similar process to proving (b) implies that $\psi$ is a homomorphism. 
        \par 
        Obviously, $\psi\circ\phi$ is the identity on $U\big/K$. As for the converse, $\phi\circ\psi(u(g)\,N)=u(g)\,N$ and $\phi\circ\psi(v(g)\,N)=u(g)\,N=u(g)\big(u(g^{-1})v(g)\big)\,N=v(g)\,N$, which together means $\phi\circ\psi$ is also the identity. Therefore, $U\big/K$ is isomorphic to $U*V\big/N$.
    \end{proof}
    \par 
    Lemma \ref{lemma: algebraic lemma} enables us to establish the following theorem regarding the fundamental group of manifolds admitting a double disk bundle decomposition.
    \par 
    \begin{thm}\label{thm: fund group of M}
        Let $M$ be a smooth manifold admitting a double disk bundle decomposition $(\mathcal{P}_{\pm}:\mathcal{D}_{\pm}\to L_{\pm},\phi:\partial\mathcal{D}_{+}\to\partial\mathcal{D}_{-})$, and assume the codimensions of $L_{\pm}\subset M$ are both higher than one. Denote $i_{\pm}$ the inclusions $\partial\mathcal{D}_{\pm}\xhookrightarrow{}\mathcal{D}_{\pm}$. Then, the fundamental group $\pi_1(M)$ is isomorphic to a quotient group $\pi_1(L_{+})\big/N$, where $N$ is isomorphic to ${i_{+}}_{*}\big(\operatorname{Ker}({i_{-}}_{*}\circ\phi)\big)$ with ${i_{\pm}}_{*}$ the corresponding induced homomorphisms between the fundamental groups. Moreover, if the codimension of $L_{-}$ is higher than two, then $\pi_1(M)$ is isomorphic to $\pi_1(L_{+})$. 
    \end{thm}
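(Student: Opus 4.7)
The plan is to apply the Seifert--van Kampen theorem (Lemma \ref{lem: Seifert van kampen thm}) to an open thickening of the double disk bundle decomposition, use the long exact sequence of each boundary sphere bundle (Lemma \ref{lemma: long sequ of fibration}) to show the two relevant fundamental-group homomorphisms are surjective, and then invoke the algebraic Lemma \ref{lemma: algebraic lemma} to collapse the free product.

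First I would enlarge $\mathcal{D}_{+}$ and $\mathcal{D}_{-}$ to open sets $A_{+}$ and $A_{-}$ by pushing slightly outward across the common boundary $\partial\mathcal{D}_{+}=\partial\mathcal{D}_{-}$ through a bicollar, so that $M=A_{+}\cup A_{-}$, each $A_{\pm}$ deformation retracts onto $\mathcal{D}_{\pm}$, and $A_{+}\cap A_{-}$ deformation retracts onto $\partial\mathcal{D}_{+}$. Because $\mathcal{P}_{\pm}$ is a disk bundle, $\mathcal{D}_{\pm}$ further retracts onto $L_{\pm}$, giving $\pi_1(A_{\pm})\cong\pi_1(L_{\pm})$. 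Since the codimension hypothesis makes each fiber $S^{k_{\pm}-1}$ of the boundary sphere bundle path-connected, $\partial\mathcal{D}_{+}$ is path-connected as well, and Seifert--van Kampen yields
\[
    \pi_1(M)\cong \pi_1(L_{+})*\pi_1(L_{-})\big/N_{0},
\]
where $N_{0}$ is the normal closure of the elements ${i_{+}}_{*}(\omega)\cdot\big({i_{-}}_{*}\circ\phi_{*}\big)(\omega)^{-1}$ with $\omega\in\pi_1(\partial\mathcal{D}_{+})$, and $\phi_{*}$ denotes the isomorphism on fundamental groups induced by the diffeomorphism $\phi$.

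Next I would feed the sphere-bundle fibrations $S^{k_{\pm}-1}\hookrightarrow\partial\mathcal{D}_{\pm}\to L_{\pm}$ into Lemma \ref{lemma: long sequ of fibration}. Since the codimension condition $k_{\pm}\ge 2$ forces $\pi_{0}(S^{k_{\pm}-1})=0$, exactness at $\pi_1(L_{\pm})$ gives that ${i_{\pm}}_{*}:\pi_1(\partial\mathcal{D}_{\pm})\to \pi_1(L_{\pm})$ are both epimorphisms. Setting $G=\pi_{1}(\partial\mathcal{D}_{+})$, $U=\pi_{1}(L_{+})$, $V=\pi_{1}(L_{-})$, $u={i_{+}}_{*}$, and $v={i_{-}}_{*}\circ\phi_{*}$, the hypotheses of Lemma \ref{lemma: algebraic lemma} are met, and the lemma identifies the quotient with $\pi_1(L_{+})\big/{i_{+}}_{*}\!\big(\operatorname{Ker}({i_{-}}_{*}\circ\phi_{*})\big)$, giving the first claim.

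For the moreover clause, the stronger hypothesis $k_{-}\ge 3$ also trivializes $\pi_{1}(S^{k_{-}-1})$, so the long exact sequence upgrades ${i_{-}}_{*}$ to an isomorphism; combined with the isomorphism $\phi_{*}$, the composition ${i_{-}}_{*}\circ\phi_{*}$ has trivial kernel, hence $N$ is trivial and $\pi_{1}(M)\cong\pi_{1}(L_{+})$. The main obstacle I expect is the careful bookkeeping of basepoints and the identification along $\phi$ so that the two inclusion-induced maps share a common domain $G$ on which Lemma \ref{lemma: algebraic lemma} can be applied verbatim; the thickening and retractions are standard, and once the two surjections onto $\pi_1(L_{\pm})$ are in hand the algebraic lemma handles the rest automatically.
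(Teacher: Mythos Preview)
Your proposal is correct and follows essentially the same route as the paper: thicken the two disk bundles, apply Seifert--van Kampen, use the long exact sequence of the boundary sphere bundles to get surjectivity of the inclusion-induced maps, and then invoke Lemma~\ref{lemma: algebraic lemma}. The only cosmetic difference is that the paper makes the identification of ${i_{\pm}}_{*}$ with the projection ${p_{\pm}}_{*}:\pi_1(\partial\mathcal{D}_{\pm})\to\pi_1(L_{\pm})$ explicit via the commutative triangle $p_{\pm}=\mathcal{P}_{\pm}\circ i_{\pm}$, whereas you absorb this into your notation by writing ${i_{\pm}}_{*}$ for the composite map to $\pi_1(L_{\pm})$; conversely, you are more explicit than the paper about the bicollar thickening needed to meet the open-set hypothesis of Seifert--van Kampen.
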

    \begin{proof}
        Assume $d_{\pm}=\dim L_{\pm}$. Since $n-d_{\pm}\ge 1$, the fiber $\mD^{n-d_{\pm}}$ of both disk bundles are contractible, which implies that ${\mathcal{P}_{{\pm}}}_{*}$ is a natural isomorphism between $\pi_1(\mathcal{D}_{\pm})$ and $\pi_1(L_{\pm})$. Each boundary $\partial\mathcal{D}_{\pm}$ is an $\mS^{n-d_{\pm}-1}$-bundle over $L_{\pm}$, that is, $\mS^{n-d_{\pm}-1}\xhookrightarrow{}\partial\mathcal{D}_{\pm}\xrightarrow{p_{\pm}} L_{\pm}$. The long exact sequence on homotopy groups is 
        \begin{equation}\label{equ: long sequence on DL1 joint DL2}
            \cdots\rightarrow\pi_2(L_{\pm})\rightarrow\pi_1(\mS^{n-d_{\pm}-1})\rightarrow\pi_1(\partial \mathcal{D}_{\pm})\xrightarrow{{p_{{\pm}}}_{*}}\pi_1(L_{\pm})\rightarrow 1.
        \end{equation}
        Therefore, ${p_{{\pm}}}_{*}:\pi_1(\partial \mathcal{D}_{\pm})\rightarrow\pi_1(L_{\pm})$ are surjective. Consider the following commutative diagram and the induced commutative diagram among their fundamental groups:
        \begin{equation*}
            \begin{tikzcd}
                \partial \mathcal{D}_{\pm} \arrow{rd}{p_{{\pm}}} \arrow{d}{i_{{\pm}}} &  \\
                \mathcal{D}_{\pm} \arrow{r}{\mathcal{P}_{{\pm}}} & L_{\pm}.
            \end{tikzcd}
        \end{equation*}
        It is then known that ${i_{{\pm}}}_{*}:\pi_1\big(\partial\mathcal{D}_{\pm}\big)\to \pi_1(\mathcal{D}_{\pm})$ are epimorphisms. Applying Seifert-van Kampen theorem (Lemma \ref{lem: Seifert van kampen thm}) on $\mathcal{D}_{+}\cup\mathcal{D}_{-}$ and taking Lemma \ref{lemma: algebraic lemma} into consideration, we obtain that $\pi_1(M)$ is isomorphic to $\pi_1(L_{+})\big/{\mathcal{P}_{+}}\dwst\big({i_{+}}_{*}\big(\operatorname{Ker}({i_{-}}_{*}\circ\phi)\big)\big)$. 
        \par  
        If $n-d_{-}\ge 2$, then $\pi_1(\mS^{n-d_{-}-1})$ is trivial. The long exact sequence \myeqref{equ: long sequence on DL1 joint DL2} implies that ${p_{-}}_{*}:\pi_1(\partial \mathcal{D}_{-})\rightarrow\pi_1(L_{-})$ is an isomorphism, and so is ${i_{-}}_{*}:\pi_1\big(\partial\mathcal{D}_{-}\big)\to \pi_1(\mathcal{D}_{-})$. Hence, $\operatorname{Ker}({i_{-}}_{*}\circ\phi)$ is trivial and the conclusion follows.
    \end{proof}

\subsection{Discussion of double disk bundle decompositions}\label{subsec: in 4D}
    Assume $M$ is a closed Riemannian 4-manifold admitting an isoparametric function.  Recall the discussion in \cite{li2025equiv}, or briefly, Theorem \ref{thm: isopara and LDDBD}. The underlying manifold $M$ admits an LDDBD, where the connected components of the level sets of $f$ precisely correspond to the fiber subbundles whose fibers are hyperspheres and the base spaces of the two disk bundles. Denote $(\mathcal{P}_{\pm}:\mathcal{D}_{\pm}\to L_{\pm},\phi:\partial\mathcal{D}_{+}\to\partial\mathcal{D}_{-})$ the LDDBD. Use the same notation as above: $d_{\pm}=\dim L_{\pm}$, $i_{\pm}$ the inclusions $\partial\mathcal{D}_{\pm}\xhookrightarrow{}\mathcal{D}_{\pm}$, and $p_{\pm}=\mathcal{P}_{\pm}\circ i_{\pm}$ the natural projections $\partial\mathcal{D}_{\pm}\to L_{\pm}$. We will then discuss on $d_{\pm}$ to prove our main theorems. 
    \paragraph{\bf Case 1: $d_{\pm}\le 2$.} We now discuss the values of $d_{+}$ and $d_{-}$ in further detail.
    \begin{enumerate}[1)]
        \item $d_{+}<2$ (or $d_{-}<2$). In this case, $L_{+}$ is a closed $d_{+}$-manifold, which is homeomorphic to either a single point or a circle. The fundamental group $\pi_1(L_{+})$ is isomorphic to either $\{1\}$ or $\mZ$, and then it follows from Theorem \ref{thm: fund group of M} that $\pi_1(M)$ is either trivial or isomorphic to a quotient group of $\mZ$. Anyway, $\pi_1(M)$ is abelian, and it will contradict to Lemma \ref{lemma: hyper non abelian} if $M$ can be equipped with a negatively curved metric. 
        \item $d_{+}=d_{-}=2$. Both $L_{+}$ and $L_{-}$ are closed surfaces. 
        \begin{itemize} 
            \item If $L_{+}$ (or $L_{-}$) is homeomorphic to the sphere, the real projective plane, or the torus, a similar discussion as in the case $d_{+}<2$ remains valid. 
            \item If $L_{+}$ (or $L_{-}$) is homeomorphic to a closed hyperbolic surface, then $\partial\mathcal{D}_{+}$ is a Seifert fibered space with a hyperbolic base. Since $\partial\mathcal{D}_{-}$ is also a Seifert fibered space, the homeomorphism $\phi:\partial \mathcal{D}_{+}\to \partial \mathcal{D}_{-}$ is homotopically fiber-preserving by the theory of Seifert fibered spaces. It follows from Waldhausen's theorem (\cite{waldhausen1968irreducible}) that $\phi$ is isotopically fiber-preserving. Consequently, up to isotopy, $\phi$ induces an isomorphism between fiber bundles $\mathcal{D}_{+}\to L_{+}$ and $\mathcal{D}_{-}\to L_{-}$, and $M$ is an $\mS^2$-bundle over $L_{+}$. From the long exact sequence on homotopy groups associated with $\mS^2\xhookrightarrow{}M\xrightarrow{}L_{+}$, it follows that $\pi_1(M)$ is isomorphic to $\pi_1(L_{+})$. If $M$ can be equipped with a negatively curved metric, then the projection $p:M\to L_{+}$ would be a homotopy equivalence by the Whitehead theorem. However, this is impossible, as closed negatively curved manifolds of different dimensions cannot be homotopy equivalent. 
            \item If both $L_{+}$ and $L_{-}$ are homeomorphic to the Klein bottle $\mathbf{K}$, we have the following commutative diagram:
            \begin{equation*}
                \begin{tikzcd}
                    \pi_2(\mathbf{K})=1\arrow{r} & \pi_1(\mS^1)\cong\mZ \arrow{r} \arrow{d}{} & \pi_1\big(\partial \mathcal{D}_{\pm}\big) \arrow{r}{{p_{\pm}}_{*}} \arrow{d}{{i_{\pm}}_{*}} & \pi_1(\mathbf{K}) \arrow{r}\arrow{d}{=} & 1 \\
                    & \pi_1(\mD^2)=1 \arrow{r}  & \pi_1(\mathcal{D}_{\pm}) \arrow{r}{{\mathcal{P}_{\pm}}_{*}} & \pi_1(\mathbf{K})\arrow{r} & 1.
                \end{tikzcd}
            \end{equation*}
            It follows that the kernel $\ker\big({i_{\pm}}_{*}\big)$ is isomorphic to $\mZ$, and thus, by Theorem \ref{thm: fund group of M}, $\pi_1(M)$ is isomorphic to $\pi_1(\mathbf{K})\big/N$, where $N$ is a cyclic normal subgroup. We denote 
            \[
                \pi_1(\mathbf{K})=\langle a,b\,|\, ab=ba^{-1}\rangle.
            \]
            A cyclic normal subgroup can only be $\{1\}$, $\langle a^k\rangle$ or $\langle b^{2k}\rangle$ for positive integer $k$. In every case, $\pi_1(M)$ is a semidirect product of two cyclic groups. This leads to a contradiction with either Lemma \ref{lemma: hyper non abelian} or Lemma \ref{lemma: hyper sub is Z} if $M$ is negatively curved. 
        \end{itemize}
    \end{enumerate}
    \paragraph{\bf Case 2: $d_{+}\le 2$ and $d_{-}=3$.} Since $\partial\mathcal{D}_{+}$ is connected, $\mathcal{P}_{-}:\mathcal{D}_{-}\to L_{-}$ must be a twisted $[-1,1]$-bundle. Thus $p_{-}:\partial\mathcal{D}_{-}\to L_{-}$ is a 2-sheeted covering, and we denote $\sigma:\partial\mathcal{D}_{-}\to\partial\mathcal{D}_{-}$ the covering transformation. Consider another smooth manifold $\tilde{M}$ admitting an LDDBD 
    \[
        \left(\mathcal{P}_{+}:\mathcal{D}_{+}\to L_{+},\mathcal{P}_{+}:\mathcal{D}_{+}\to L_{+},\phi^{-1}\circ\sigma\circ\phi:\partial\mathcal{D}_{+}\to \partial\mathcal{D}_{+}\right).
    \]
    It is straightforward to verify that $\tilde{M}$ is a 2-sheeted covering space of $M$. The incompatibility of $M$ with a negatively curved metric follows from that of $\tilde{M}$, which is discussed in Case 1.
    \par 
    \paragraph{\bf Case 3: $d_{+}=d_{-}=3$, and both disk bundles are trivial.} In this case, $\mathcal{D}_{\pm}$ is homeomorphic to $L_{\pm}\times[-1,1]$ and $\phi$ is a homeomorphism between $L_{+}\times\{-1\}\sqcup L_{+}\times\{1\}$ and $L_{-}\times\{-1\}\sqcup L_{-}\times\{1\}$. Without loss of generailty, we assume 
    \[
        \phi(x,\varepsilon)=\left\{
            \begin{aligned}
                & \big(\phi_1(x),-1\big)\in L_{-}\times\{-1\},&\ \text{if}\ \varepsilon=-1;\\
                & \big(\phi_2(x),1\big)\in L_{-}\times\{1\},&\ \text{if}\  \varepsilon=1.
            \end{aligned}
        \right.
    \]
    It follows that $L_{+}$ is homeomorphic to $L_{-}$, and $M$ is the total space of a $L_{+}$-bundle over the circle $\mS^1$, or equivalently, $M$ is homeomorphic to a mapping torus $\mathcal{M}_{\tau}$ of a homeomorphism $\tau$ of $L_{+}$, where $\tau=\phi_1^{-1}\circ\phi_2$. 
    \par 
    Firstly, we further assume $L_{+}$ is orientable and $\tau$ is orientation-preserving, which implies that $\mathcal{M}_{\tau}$ is also orientable. The homology spectral sequence([ref]) relates the Euler characteristics of the total space $\mathcal{M}_{\tau}$, base space $\mS^1$, and the fiber $L_{+}$. Denoting these Euler characteristics by $\chi\big(\mathcal{M}_{\tau}\big),\chi(L_{+})$, and $\chi(\mS^1)$, respectively, we have
    \[
        \chi\big(\mathcal{M}_{\tau}\big)=\chi(L_{+})\cdot\chi(\mS^1)=0.
    \]
    By the established relationship between the Euler characteristic and curvature in dimension 4 (see \cite{von1955curvature}), such a manifold $M$ can never admit a negatively curved metric. 
    \par  
    If $L_{+}$ is orientable but $\tau$ is orientation-reversing, we consider the mapping torus $\mathcal{M}_{\tau^2}$, which is clearly a 2-sheeted covering space of $\mathcal{M}_{\tau}$. Then, the incompatibility of $\mathcal{M}_{\tau}$ with a negatively curved metric follows from that of $\mathcal{M}_{\tau^2}$. 
    \par    
    If $L_{+}$ is non-orientable, let $\tilde{L}$ denote the orientable double cover of $L_{+}$. The homeomorphism $\tau$ of $L_{+}$ has two lifts to $\tilde{L}$, one of which is orientation-preserving and is denoted by $\tilde{\tau}$. The orientable mapping torus $\mathcal{M}_{\tilde{\tau}}$ is a 2-sheeted covering space of $\mathcal{M}_{\tau}$, and thus $\mathcal{M}_{\tau}$ cannot admit a negatively curved metric either. 
    \par  
    \paragraph{\bf Case 4: $d_{+}=d_{-}=3$, and both disk bundles are twisted.} This is the only remaining case, because whether $\partial\mathcal{D}_{+}$ and $\partial\mathcal{D}_{-}$ are both disconnected or both connected determines that these two $[-1,1]$-bundles must either both be trivial or both be twisted. In this case, $\mathcal{D}_{\pm}$ is homeomorphic to 
    \[
        \frac{[0,1]\times\partial\mathcal{D}_{\pm}}{(0,x)\sim \big(0,\sigma_{\pm}(x)\big)},
    \]
    where $\sigma_{\pm}$ is the covering transformation of the 2-sheeted covering map $p_{\pm}:\partial\mathcal{D}_{\pm}\to L_{\pm}$. From this viewpoint, $M$ is then homeomorphic to 
    \[
        \frac{[0,1]\times\partial\mathcal{D}_{+}}{(0,x)\sim (0,\sigma_{+}(x)),(1,x)\sim \big(1,\phi^{-1}\circ\sigma_{-}\circ\phi(x)\big)}. 
    \]
    Consider the mapping torus $\mathcal{M}_{\tau}$ with $\tau=\phi^{-1}\circ\sigma_{-}\circ\phi\circ\sigma_{+}$, which is homeomorphic to  
    \[
        \frac{[0,1]\times\partial\mathcal{D}_{+}}{\big(0,x\big)\sim \big(1,\phi^{-1}\circ\sigma_{-}\circ\phi\circ\sigma_{+}(x)\big)}.
    \]
    Immediately, we can define
    \begin{align*}
        \pi:\mathcal{M}_{\tau}&\to M\\
        (t,x)&\mapsto \left\{
            \begin{aligned}
                &\left(2t,x\right),\quad &\text{if}\ t\in\left[0,\frac{1}{2}\right];\\
                &\left(2-2t,\phi^{-1}\circ\sigma_{-}\circ\phi(x)\right),\quad &\text{if}\ t\in\left[\frac{1}{2},1\right].
            \end{aligned}
        \right.
    \end{align*}
    $\pi$ is a well-defined 2-sheeted covering map, because 
    \begin{align*}
        &\pi(0,x)=\pi(1,\phi^{-1}\circ\sigma_{-}\circ\phi(x))=(0,x)\\
        \sim&\big(0,\sigma_{+}(x)\big)=\pi\big(1,\phi^{-1}\circ\sigma_{-}\circ\phi\circ\sigma_{+}(x)\big)=\pi\big(0,\sigma_{+}(x)\big)
    \end{align*}
    for any $x\in\partial\mathcal{D}_{+}$.  Therefore, The incompatibility of $M$ with a negatively curved metric follows from that of $\mathcal{M}_{\tau}$, which is discussed in Case 3.
    \par  
    Combining the first two cases, we complete the proof of Theorem \ref{thm: fundamental group when singular}. Taking all four cases into account, we can immediately derive Theorem \ref{thm: exclu of hyperbolic}.

\section{Failure of Theorem \ref{thm: exclu of hyperbolic} in Dimension Three} \label{sec: fail in 3d}
    In proving the main theorem, we were able to obtain sufficient topological information in the four-dimensional case, allowing for a detailed discussion.
    As the dimension increases, however, the complexity of these decompositions grows rapidly, leading to significantly more intricate discussions. But surprisingly, Theorem \ref{thm: exclu of hyperbolic} fails to hold in the $3$-dimensional case. We can construct a counterexample as follows.
    \par    
    Suppose $S_g$ is an orientable closed surface of genus $g\ge 2$, and let $\tau$ be an orientation-preserving diffeomorphism on $S_g$. It is known that the mapping torus $\mathcal{M}_{\tau}$ admits a double disk bundle decomposition, where both base spaces are homeomorphic to $S_g$, and both bundles are trivial. By the Nielsen–Thurston classification (see, for example, \cite{thurston1986hyperbolic}), $\tau$ is isotopically periodic, reducible, or pseudo-Anosov, and when $\tau$ is pseudo-Anosov, the mapping torus $\mathcal{M}_{\tau}$ has a hyperbolic structure, that is, it admits a hyperbolic metric. This shows that the conclusion of Theorem \ref{thm: exclu of hyperbolic} does not hold for closed $3$-manifolds.



\bibliographystyle{amsplain}
\bibliography{Tran4D}

\end{document}